\numberwithin{equation}{section} \textwidth 15cm \textheight
\newtheorem{theorem}{Theorem}[section]
\theoremstyle{plain}
\newtheorem{corollary}[theorem]{Corollary}
\newtheorem{remark}[theorem]{Remark}
\numberwithin{equation}{section}
\begin{document}
\title[Univalence  and quasiconformal extensions of an operator]{Univalence and quasiconformal extension of an integral
operator}
\author{\ Erhan Deniz}
\address{(E. Deniz) \textit{Department of Mathematics$,$ Faculty of Science}
\textit{and Letters,} \textit{Kafkas University$,$ Kars$,$ Turkey} }
\email{\textbf{edeniz36@gmail.com}}
\author{\ Stanisława Kanas}
\address{(S. Kanas) Institute of Mathematics, University of Rzeszów, Rzeszó%
w, Poland}
\email{\textbf{skanas@ur.edu.pl}}
\author{\ Halit Orhan}
\address{(H. Orhan) \textit{Department of Mathematics$,$ Faculty of Science}
\textit{Atatürk University$,$ TR-$25240$ Erzurum$,$ Turkey}}
\email{\textbf{horhan@atauni.edu.tr}} \keywords{integral operator;
univalent function; quasiconformal mapping; univalence criteria;
Loewner chain\\
{\indent\textrm{2010 }}\ \textit{Mathematics Subject Classifcation:}
Primary 47A63; Secondary 30C80, 30C45, 30C62, 30C65.}

\begin{abstract}
In this paper we give some sufficient conditions of analyticity  and
univalence for functions defined by an integral operator.  Next, we
refine the result to a quasiconformal extension criterion with the
help of the Becker's method. Further,  new univalence criteria  and
the significant relationships with other results are given. A number
of known univalence conditions would follow upon specializing the
parameters involved in our main results.
\end{abstract}

\maketitle

\section{\textbf{Introduction}}

Denote by $\mathcal{U}_{r}=\left\{ {z\in \mathbb{C}:\;\left\vert
z\right\vert <r}\right\} \quad (0<r\le 1)$ the disk of radius $r$
and let $\mathcal{U}=\mathcal{U}_{1}.$ Let $\mathcal{A}$ denote the
class of \textit{analytic functions} in the open unit disk
$\mathcal{U}$ which satisfy the
usual normalization condition $f(0)=f'(0)-1=0$, and let $%
\mathcal{S}$ be the subclass of $\mathcal{A}$ consisting of the
functions $f$ which are \textit{univalent} in $\mathcal{U}.$ Also,
let $\mathcal{P}$ denote the class of functions
$p(z)=1+\sum_{n=1}^{\infty }p_{k}z^{k}$ that satisfy the condition
$\Re\ p(z)>0$ ($z\in \mathcal{U}$), and $\Omega$ be a class of
functions $w$ which are analytic in $\mathcal{U}$ and such that
$\left\vert w(z)\right\vert <1$ for $z\in \mathcal{U}.$ These
classes have been one of the important
subjects of research in geometric function theory for a long time (see \cite%
{SrOw}).

We say that a sense-preserving homeomorphism $f$ of a plane domain
$G\subset \mathbb{C}$ is $k-$\textit{quasi- conformal}, if $f$ is
absolutely continuous on almost all lines parallel to coordinate
axes and $\left\vert f_{\overline{z}}\right\vert \leq
k\left\vert f_{z}\right\vert$, almost everywhere in $G,$ where $f_{\overline{z}%
}=\partial f/ \partial \overline{z},$ $f_{z}=\partial f/
\partial z$ and $k$ is a real constant with $0\leq k<1.$
For the general defnition of quasiconformal mappings see \cite{Ah}.

Univalence of complex functions is an important property but, in
many cases  is impossible to show directly that a certain function
is univalent. For this reason, many authors found different
sufficient conditions of univalence. Two of the most important  are
the well-known criteria of Becker \cite{Be} and Ahlfors \cite{Ah}.
Becker and Ahlfors' works depend upon a ingenious use of the theory
of the Loewner chains and the generalized Loewner differential
equation. Extensions of these two criteria were given by Ruscheweyh
\cite{Ru}, Singh and Chichra \cite{SiCh},  Kanas and Lecko
\cite{KL1, KL2} and Lewandowski \cite {Le}. The recent
investigations on this subject are due to Raducanu et\ al.
\cite{RaOrDe} and Deniz and Orhan \cite{DeOr}, \cite{DeOr1}.
Furthermore, Pascu \cite{Pa} and Pescar \cite{Pe} obtained some
extensions of Becker's univalence criterion for an integral
operator, while Ovesea \cite{Ov1} obtained a generalization of
Ruscheweyh's univalence criterion for an integral operator.

In the present paper, we  formulate a new criteria for univalence of
the functions defined by an integral operator $G_{\alpha }$,
considered in \cite{Ov1}, and improve obtained there results. Also
we obtain a refinement to a quasiconformal extension criterion of
the main result. In the special cases, our univalence conditions
contain the results obtained by some of the authors cited in
references. Our considerations are based on the theory of Loewner
chains.

\section{\textbf{Loewner chains and quasiconformal extension}}

The method of Loewner chains will prove to be crucial in our later
consideration therefore we present a brief summary of that method.

Let $\mathcal{L}(z,t)=a_{1}(t)z+a_{2}(t)z^{2}+...,$ $a_{1}(t)\neq 0,$ be a
function defined on $\mathcal{U}\times I$, where $I:=[0,\infty )$ and $%
a_{1}(t)$ is a complex-valued, locally absolutely continuous
function on $I.$ Then $\mathcal{L}(z,t)$ is said to be
\textit{Loewner chain} if $\mathcal{L}(z,t)$ has the following
conditions;

\begin{enumerate}
\item[(i)] $\mathcal{L}(z,t)$ is analytic and univalent in $\mathcal{U}$ for
all $t\in I$

\item[(ii)] $\mathcal{L}(z,t)\prec \mathcal{L}(z,s)$ for all $0\leq t\leq
s<\infty $,
\end{enumerate}
\noindent where the symbol \textquotedblright $\prec $
\textquotedblright\ stands for subordination. If $a_{1}(t)=e^{t}$
then we say that $\mathcal{L}(z,t)$ is a \textit{standard Loewner
chain}.

In order to prove our main results we need the following theorem due
to Pommerenke \cite{Po1} (see also \cite{Po}). This theorem is often
used to find out univalency for an analytic function, apart from the
theory of Loewner chains.

\begin{theorem}
\label{T1} \cite[Pommerenke]{Po}\textbf{\ }\textit{Let }$\mathcal{L}%
(z,t)=a_{1}(t)z+a_{2}(t)z^{2}+...$\textit{\ be analytic in }$\mathcal{U}_{r}$%
\textit{\ for all }$t\in I.$ Suppose that;

\begin{enumerate}
\item[(i)] $\mathcal{L}(z,t)$ is a \textit{locally absolutely continuous
function in\ the interval }$I,$\textit{\ and locally uniformly with respect
to }$\mathcal{U}_{r}.$\textit{\ }

\item[(ii)] $a_{1}(t)$ is a complex valued continuous function on $I$ such
that $a_{1}(t)\neq 0,$ $\left\vert {a_{1}(t)}\right\vert \rightarrow \infty $%
\textit{\ for }$t\rightarrow \infty $ and%
\begin{equation*}
\left\{ \frac{{\mathcal{L}(z,t)}}{a_{1}(t)}\right\} _{t\in I}
\end{equation*}%
\textit{\ forms a normal family of functions in }$\mathcal{U}_{r}.$

\item[(iii)] There exists an analytic function $p:\mathcal{U}\times
I\rightarrow \mathbb{C}$ satisfying $\Re {p(z,t)}>0$\textit{\ for
all }$z\in \mathcal{U},\;t\in I$ and
\begin{equation}
z\frac{\partial \mathcal{L}(z,t)}{\partial z}=p(z,t)\frac{\partial \mathcal{L%
}(z,t)}{\partial t}\quad (z\in \mathcal{U}_{r},\ t\in I). \label{1}
\end{equation}%
\textit{Then, for each }$t\in I,$\textit{\ the function }$\mathcal{L}(z,t)$%
\textit{\ has an analytic and univalent extension to the whole disk }$%
\mathcal{U}$ or the function $\mathcal{L}(z,t)$ is a Loewner chain.
\end{enumerate}
\end{theorem}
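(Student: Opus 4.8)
The plan is to exhibit $\{\mathcal{L}(\cdot,t)\}$ as a subordination chain by solving the generalized Loewner differential equation attached to \eqref{1}, and then to let hypothesis (ii) carry univalence from a small disk out to all of $\mathcal{U}$. First I would rewrite \eqref{1} as $\partial_t\mathcal{L}=(z/p)\,\partial_z\mathcal{L}$ and, for each fixed $s\in I$, introduce the transition function $w(z,s,t)$ defined for $t\ge s$ as the solution of the initial value problem
\begin{equation*}
\frac{\partial w}{\partial t}=-\frac{w}{p(w,t)},\qquad w(z,s,s)=z.
\end{equation*}
Because $p$ is analytic in its first variable and absolutely continuous in $t$, Carath\'eodory's existence theory gives a unique solution depending analytically on the initial value $z$. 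Since (iii) yields $\Re\{1/p\}=\Re p/|p|^{2}>0$, the computation $\tfrac{d}{dt}|w|^{2}=-2|w|^{2}\,\Re\{1/p(w,t)\}<0$ shows $|w(z,s,t)|$ is strictly decreasing, so $w(\cdot,s,t)$ maps $\mathcal{U}$ into itself and fixes the origin; linearizing at $w=0$ and using the relation $1/p(0,t)=a_{1}'(t)/a_{1}(t)$ forced by \eqref{1} gives $\partial_z w(0,s,t)=a_{1}(s)/a_{1}(t)$, of modulus $<1$ for $t>s$ and tending to $0$ as $t\to\infty$. Running the flow backwards (uniqueness) shows $w(\cdot,s,t)$ is univalent, hence a genuine Schwarz function.

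Differentiating $\mathcal{L}(w(z,s,t),t)$ in $t$ and substituting both the equation for $\partial_t w$ and \eqref{1} makes the derivative vanish identically, so that
\begin{equation*}
\mathcal{L}(z,s)=\mathcal{L}\bigl(w(z,s,t),t\bigr),\qquad 0\le s\le t .
\end{equation*}
This already presents $\mathcal{L}(\cdot,s)$ as $\mathcal{L}(\cdot,t)$ composed with a Schwarz function, i.e.\ as a subordination $\mathcal{L}(\cdot,s)\prec\mathcal{L}(\cdot,t)$ --- but only once each $\mathcal{L}(\cdot,t)$ is known to be univalent, which is precisely what remains.

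To secure univalence I would set $u_t(z)=\bigl(a_{1}(t)/a_{1}(s)\bigr)w(z,s,t)=z+\cdots$; each $u_t$ is normalized and univalent, so $\{u_t\}$ is automatically a normal family. Writing $g(z,t)=\mathcal{L}(z,t)/a_{1}(t)=z+c_{2}(t)z^{2}+\cdots$ (normal, hence with locally bounded coefficients $c_n(t)$, by (ii)) and inserting $w=(a_{1}(s)/a_{1}(t))u_t$ into the displayed identity gives
\begin{equation*}
\frac{\mathcal{L}(z,s)}{a_{1}(s)}=u_t(z)+\frac{a_{1}(s)}{a_{1}(t)}\,c_{2}(t)\,u_t(z)^{2}+\cdots ,
\end{equation*}
where every correction term carries a factor $a_{1}(s)/a_{1}(t)\to0$. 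Hence $u_t\to\mathcal{L}(\cdot,s)/a_{1}(s)$ locally uniformly on $\mathcal{U}_{r}$ as $t\to\infty$, and by Hurwitz's theorem the non-constant limit of univalent functions is univalent, so $\mathcal{L}(\cdot,s)$ is univalent on $\mathcal{U}_r$ and the composition identity makes $\{\mathcal{L}(\cdot,t)\}$ a subordination chain there. Finally, since the equation for $w$, and therefore $u_t$, is defined on all of $\mathcal{U}$ (where $p$ is given with $\Re p>0$), the family $\{u_t\}$ is normal on $\mathcal{U}$ and converges there by Vitali's theorem; its limit is the univalent extension of $\mathcal{L}(\cdot,s)/a_{1}(s)$ to $\mathcal{U}$, so each $\mathcal{L}(\cdot,t)$ extends univalently to $\mathcal{U}$ and the extended family is a Loewner chain.

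The step I expect to be the main obstacle is this last bootstrap: the composition identity only delivers subordination after univalence of $\mathcal{L}(\cdot,t)$ is established, so the argument must avoid circularity. The crux is that hypothesis (ii) --- normality together with $|a_{1}(t)|\to\infty$ --- is exactly what kills the correction terms above and lets Hurwitz's theorem turn the ODE-generated approximants $u_t$ into an honest univalent limit. The remaining technical care lies in the Carath\'eodory theory (only absolute continuity in $t$ is available), in justifying the analytic dependence of $w$ on $z$, and in the locally uniform passage to the limit.
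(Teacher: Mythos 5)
The paper does not prove this statement at all: it is quoted verbatim as Pommerenke's theorem (with a citation to \cite{Po1}, \cite{Po}) and used as a black box in the proofs of Theorems \ref{T2}--\ref{T6}. Your reconstruction is, in substance, Pommerenke's own argument --- characteristic ODE $\partial_t w=-w/p(w,t)$, transition functions as univalent Schwarz maps, the conserved quantity $\mathcal{L}(w(z,s,t),t)=\mathcal{L}(z,s)$, renormalization $u_t=(a_1(t)/a_1(s))\,w(\cdot,s,t)\in\mathcal{S}$, and a Hurwitz/Vitali passage to the limit --- and it is correct in outline; in particular you correctly identify that subordination is only harvested \emph{after} univalence is obtained from the limit of the $u_t$, avoiding the circularity you flag. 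Two points deserve to be written out rather than asserted: first, the convergence of the tail $\sum_{n\ge 2}(a_1(s)/a_1(t))^{n-1}c_n(t)u_t(z)^n\to 0$ needs both the Cauchy estimates $|c_n(t)|\le C(\rho)\rho^{-n}$ supplied by normality in (ii) and the growth-theorem bound $|u_t(z)|\le |z|/(1-|z|)^2$ for the normalized univalent $u_t$, and it only works for $t$ large enough that $|a_1(s)/a_1(t)|$ beats the ratio of these bounds --- this is exactly where $|a_1(t)|\to\infty$ is indispensable (equivalently, via Harnack, it is what forces $w(z,s,t)\to 0$); second, the Carath\'eodory existence/uniqueness theory for the ODE needs $1/p(\cdot,t)$ analytic in $z$ and measurable, locally integrable in $t$, which should be checked against the stated hypotheses on $p$ and the local absolute continuity in (i).
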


The equation (\ref{1}) is called the \textit{generalized Loewner
differential equation}.\medskip

The following strengthening of Theorem \ref{T1} leads to the method
of constructing quasiconformal extension, and is based on
the result due to Becker (see \cite{Be}, \cite{Be2} and also \cite%
{Be3}).

\begin{theorem}\cite[Becker]{Be, Be2, Be3}
\label{T11} Suppose that $\mathcal{L}(z,t)$ is a Loewner chain for which $%
p(z,t)$, defined  in \eqref{1}, satisfies the condition
\begin{eqnarray*}
p(z,t) &\in &U(k)\;:=\left\{ w\in \mathbb{C}
:\left\vert \;\frac{w-1}{w+1}\right\vert \leq k\right\} \\
&=&\left\{ w\in \mathbb{C}:\left\vert w-\;\frac{1+k^{2}}{1-k^{2}}\right\vert \leq \frac{2k}{1-k^{2}}%
\right\} \quad \left( 0\leq k<1\right)
\end{eqnarray*}%
for all $z\in \mathcal{U}$ and $t\in I$. Then $\mathcal{L}(z,t)$ admits a
continuous extension to $\overline{\mathcal{U}}$ for each $t\in I$ and the
function $F(z,\bar{z})$ defined by%
\begin{equation*}
F(z,\bar{z})=\left\{
\begin{array}{lcl}
\mathcal{L}(z,0) & for&{\left\vert z\right\vert <1}, \\
\mathcal{L}\left( \frac{z}{|z|},\log |z|\right) &for& {\left\vert
z\right\vert\geq 1},%
\end{array}%
\right.
\end{equation*}%
is a $k-$quasiconformal extension of $\mathcal{L}(z,0)$ to $\mathbb{C}$.
\end{theorem}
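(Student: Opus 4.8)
The plan is to verify directly that the glued map $F$ is a sense-preserving homeomorphism of $\mathbb{C}$ whose complex dilatation is bounded in modulus by $k$ almost everywhere; by the analytic definition of quasiconformality recalled in the Introduction, this yields the assertion. First I would record that $U(k)\subset\{w:\Re w>0\}$, so the hypothesis on $p$ strengthens condition (iii) of Theorem \ref{T1} and in particular $\mathcal{L}(z,t)$ is a genuine (standard) Loewner chain with $a_1(t)=e^t$. This normalization is exactly what makes $F$ behave like the identity near $\infty$: as $|z|\to\infty$ one has $t=\log|z|\to\infty$ and $\mathcal{L}(z/|z|,t)\sim e^t(z/|z|)=z$, so $F$ is a plausible homeomorphism of the sphere fixing $\infty$.

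The core of the argument is a dilatation computation in the exterior region $|z|\ge 1$. There I would substitute $\zeta=z/|z|$ and $t=\log|z|$, writing $z=e^{t+i\theta}$ with $\zeta=e^{i\theta}$, so that $F(z)=\mathcal{L}(\zeta,t)$. A short chain-rule calculation, using $\partial t/\partial z=1/(2z)$, $\partial t/\partial\bar z=1/(2\bar z)$, $\partial\zeta/\partial z=\zeta/(2z)$ and $\partial\zeta/\partial\bar z=-\zeta/(2\bar z)$, gives
\begin{equation*}
F_z=\frac{1}{2z}\bigl(\zeta\,\mathcal{L}_\zeta+\mathcal{L}_t\bigr),\qquad F_{\bar z}=\frac{1}{2\bar z}\bigl(-\zeta\,\mathcal{L}_\zeta+\mathcal{L}_t\bigr).
\end{equation*}
Next I would invoke the generalized Loewner equation \eqref{1} in the form $\zeta\,\mathcal{L}_\zeta=p(\zeta,t)\,\mathcal{L}_t$, valid on $|\zeta|=1$ after passing to the boundary by continuity; since $\Re p>0$ forces $\mathcal{L}_t\neq 0$, substitution yields
\begin{equation*}
\frac{F_{\bar z}}{F_z}=\frac{z}{\bar z}\cdot\frac{1-p(\zeta,t)}{1+p(\zeta,t)},\qquad\text{hence}\qquad\left|\frac{F_{\bar z}}{F_z}\right|=\left|\frac{p(\zeta,t)-1}{p(\zeta,t)+1}\right|\le k,
\end{equation*}
the final inequality being precisely the hypothesis $p\in U(k)$. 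In the interior $|z|<1$ we have $F(z)=\mathcal{L}(z,0)$, analytic and univalent, so $F_{\bar z}=0$ and the bound is trivial; moreover $|F_{\bar z}|<|F_z|$ everywhere shows $F$ is sense-preserving.

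It then remains to assemble these local facts into the global statement. I would check that $F$ is continuous across $|z|=1$ (the two definitions agree there, both giving the boundary values of $\mathcal{L}(\cdot,0)$), that $F$ is absolutely continuous on lines (being smooth off the circle and continuous across it), and that $F$ is injective and onto $\mathbb{C}$, using that the image domains $\mathcal{L}(\mathcal{U},t)$ increase with $t$ by the subordination (ii), exhaust $\mathbb{C}$ as $t\to\infty$ since $|a_1(t)|\to\infty$, and have mutually disjoint boundary curves, so that each point of $\mathbb{C}\setminus\mathcal{L}(\mathcal{U},0)$ is covered exactly once. Combining the interior and exterior dilatation bounds gives $|F_{\bar z}|\le k|F_z|$ almost everywhere, so $F$ is the desired $k$-quasiconformal extension of $\mathcal{L}(z,0)$.

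The step I expect to be the genuine obstacle is the boundary behavior: justifying that each $\mathcal{L}(\cdot,t)$, and in particular $\mathcal{L}(z,0)$, extends continuously to $\overline{\mathcal{U}}$, and that the Loewner equation \eqref{1} persists up to $|\zeta|=1$, so that the dilatation identity is legitimate almost everywhere on the circle. The interior computation and the algebra reducing the dilatation to the $U(k)$ condition are routine; the analytic subtleties of the continuous extension and of the global homeomorphism property are where the real work lies.
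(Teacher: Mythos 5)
The paper does not prove this statement: Theorem \ref{T11} is quoted from Becker's papers \cite{Be, Be2, Be3} without proof, so there is no internal argument to compare yours against. Judged on its own, your proposal gets the central computation right. The chain-rule formulas for $F_z$ and $F_{\bar z}$ in the exterior region, the substitution of the Loewner equation in the form $\zeta\,\mathcal{L}_\zeta=p\,\mathcal{L}_t$, and the reduction of $\left\vert F_{\bar z}/F_z\right\vert\le k$ to the hypothesis $p\in U(k)$ are exactly the heart of Becker's theorem; your observation that $U(k)$ lies in the right half-plane, so that Theorem \ref{T1} applies and the chain may be taken standard with $a_1(t)=e^t$, is also the correct normalization for $F(z)\sim z$ near infinity.

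However, the steps you yourself flag as ``where the real work lies'' are genuine gaps, not routine verifications, and the proposal offers no mechanism to close them. The dilatation identity is derived at points $\zeta$ on the unit circle, where neither $\mathcal{L}_\zeta$ nor the validity of \eqref{1} is available a priori; the continuous extension of $\mathcal{L}(\cdot,t)$ to $\overline{\mathcal{U}}$ is part of the conclusion, not a hypothesis; and the injectivity argument via ``mutually disjoint boundary curves'' is unjustified, since nestedness of the domains $\mathcal{L}(\mathcal{U},t)$ does not by itself give disjoint boundaries or a bijection onto $\mathbb{C}$. The standard device that repairs all three difficulties at once --- and which is the actual content of Becker's proof --- is an approximation from inside: for $0<r<1$ define $F_r$ by gluing $\mathcal{L}(rz,0)$ on $\left\vert z\right\vert<1$ with $\mathcal{L}\left(rz/\left\vert z\right\vert,\log\left\vert z\right\vert\right)$ on $\left\vert z\right\vert\ge 1$. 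Each $F_r$ is smooth wherever your computation is performed, is a sense-preserving local homeomorphism with dilatation at most $k$, and can be shown to be a global homeomorphism of the plane; one then lets $r\to 1^{-}$ and invokes the compactness of suitably normalized families of $k$-quasiconformal mappings to obtain simultaneously the continuous extension of $\mathcal{L}(\cdot,t)$ to $\overline{\mathcal{U}}$ and the $k$-quasiconformality of the limit $F$. Without this (or an equivalent limiting argument), your proposal is a correct and well-organized heuristic, but not yet a proof.
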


Detailed information about Loewner chains and quasiconformal
extension criterion can be found in \cite{Ah}, \cite{AnHi},
\cite{Bet}, \cite{ca}, \cite{Kr}, \cite{Pf}. For a  recent account
of the theory we refer the reader to \cite{Ho, Ho1, Ho2}.

\section{\textbf{Univalence criteria}}

The first theorem is our glimpse of the role of the generalized
Loewner chains  in univalence results for an operator $G_\alpha$,
studied in \cite{Ov1}. The theorem formulates the conditions under
which such an operator is analytic and univalent.

\begin{theorem}
\label{T2}\textit{\ Let }$\alpha ,\;c\;$and $s$\textit{\ be complex
numbers, that }$c\notin \left[ {0,\infty }\right) ;$ $s=a+ib,$
$a>0,$ $b\in \mathbb{R};$ $m>0$ and $f,g\in \mathcal{A}.$ If there
exists a function $h$,  analytic
in $\mathcal{U}$, and such that $h(0)=h_{0},$ $%
h_{0}\in \mathbb{C},$ $h_{0}\notin ({-\infty ,0]}$, and the
inequalities
\begin{equation}
\left\vert {{\alpha }-\frac{m}{2a}}\right\vert <\frac{m}{2a},  \label{eq1}
\end{equation}%
\begin{equation}
\left\vert {\frac{c}{h(z)}+\frac{m}{2{\alpha }}}\right\vert <\frac{m}{%
2\left\vert {\alpha }\right\vert },  \label{eq2}
\end{equation}%
and%
\begin{equation}
\left\vert {\frac{-c{\alpha }}{ah(z)}\left\vert z\right\vert ^{m/
a}+\left( {1-\left\vert z\right\vert ^{m/ a}}\right) }\left[
{(\alpha -1){\frac{zg'(z)}{g(z)}}}+{1+\frac{zf''(z)}{f'(z)}}+{{{\frac{zh'(z)}{h(z)}}}}\right] {-\frac{m}{2a}}%
\right\vert \le \frac{m}{2a}  \label{eq3}
\end{equation}%
hold true for all $z\in \mathcal{U},$ \textit{then the function }%
\begin{equation}
G_{\alpha }(z)=\left[ {\alpha \int\limits_{0}^{z}{{g^{\alpha
-1}(u)}f'(u)du}}\right] ^{1/ {\alpha }}  \label{eq4}
\end{equation}%
\textit{is analytic and univalent in }$\mathcal{U},$\textit{\ where the
principal branch is intended.}
\end{theorem}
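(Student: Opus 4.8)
The plan is to construct a Loewner chain $\mathcal{L}(z,t)$ whose initial value $\mathcal{L}(z,0)$ coincides (up to the $1/\alpha$ power structure) with $G_\alpha(z)$, and then verify the hypotheses of Pommerenke's theorem (Theorem~\ref{T1}) to conclude univalence. The natural candidate is suggested by the three inequalities: condition~\eqref{eq3} has the shape $|p(z,t)-m/(2a)|\le m/(2a)$, which is exactly the statement that the function $p(z,t)$ appearing in the generalized Loewner equation~\eqref{1} satisfies $\Re\, p(z,t)>0$, since the disk $\{|w-m/(2a)|\le m/(2a)\}$ lies in the right half-plane. So the role of~\eqref{eq3} is clear at the outset: it will be the condition (iii) of Pommerenke's theorem once the chain is correctly guessed.

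First I would write down the chain explicitly. Guided by the integral operator~\eqref{eq4} and the factor $|z|^{m/a}$ appearing in~\eqref{eq3}, the idea is to replace $|z|$ by a real parameter $e^{-t}$ and set something like
\begin{equation*}
\mathcal{L}(z,t)=\left[\alpha\int_0^{e^{-t}z} g^{\alpha-1}(u)f'(u)\,du
+\left(e^{mt/\text{(scale)}}-1\right)\,(\text{correction term involving }h)\right]^{1/\alpha},
\end{equation*}
with the exponents arranged so that the leading coefficient $a_1(t)$ grows like $e^t$ (or at least tends to $\infty$) and so that at $t=0$ the extra term vanishes, recovering $G_\alpha$. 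I would then compute $z\,\partial\mathcal{L}/\partial z$ and $\partial\mathcal{L}/\partial t$ and form their quotient to read off $p(z,t)$; the payoff is that, after simplification, $p(z,t)$ should reduce precisely to the bracketed expression inside~\eqref{eq3} (with $|z|^{m/a}$ replaced by $e^{-mt/a}$ or similar). The two auxiliary inequalities~\eqref{eq1} and~\eqref{eq2} enter here: \eqref{eq1} guarantees the correct range for $\alpha$ so that $a_1(t)\to\infty$ and the branch of the $1/\alpha$ power is well defined and nonvanishing, while~\eqref{eq2} controls the term $c/h(z)$ so that the expression stays analytic (no zeros inside the bracketed base) and the normal-family condition (ii) holds.

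The verification then proceeds through the checklist of Theorem~\ref{T1}: \textbf{(a)} show $\mathcal{L}(z,t)$ is analytic in some disk $\mathcal{U}_r$ and locally absolutely continuous in $t$, locally uniformly in $z$; \textbf{(b)} identify $a_1(t)$, check $a_1(t)\neq0$ and $|a_1(t)|\to\infty$, and establish that $\{\mathcal{L}(z,t)/a_1(t)\}$ is a normal family — this is where~\eqref{eq1} and~\eqref{eq2} do their work, ensuring the base of the power stays bounded away from the branch cut and from $0$ uniformly; \textbf{(c)} verify that the base of the $1/\alpha$-power is nonvanishing so that $\mathcal{L}$ is indeed analytic (not merely formal); and \textbf{(d)} compute $p(z,t)$ and invoke~\eqref{eq3} to conclude $\Re\,p(z,t)>0$. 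Once all three conditions hold, Pommerenke's theorem yields that $\mathcal{L}(z,0)=G_\alpha(z)$ has a univalent analytic extension to $\mathcal{U}$, which is the claim.

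**The hard part will be** the bookkeeping in step~\textbf{(b)}, specifically proving that $\{\mathcal{L}(z,t)/a_1(t)\}$ forms a normal family. This requires showing that the base inside the $1/\alpha$-power avoids a neighborhood of the negative real axis (so the principal branch is single-valued and continuous) and remains uniformly bounded on compact subsets, and it is precisely here that the somewhat opaque constants in~\eqref{eq1}–\eqref{eq2} must be used to pin down a uniform estimate. A secondary technical obstacle is choosing the exponents in the definition of $\mathcal{L}(z,t)$ so that the $t$-derivative and $z$-derivative combine to give \emph{exactly} the bracket in~\eqref{eq3}: getting the factor $|z|^{m/a}\leftrightarrow e^{-mt/a}$ correspondence right, and confirming that the resulting $p(z,t)$ is analytic in $z$ with $p(z,0)$ matching the boundary case, is the computation on which the whole argument turns.
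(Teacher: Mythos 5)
Your overall strategy --- build a Loewner chain whose value at $t=0$ is $G_\alpha$ and run it through Pommerenke's theorem --- is exactly the paper's, and your guesses about the shape of the chain and the role of \eqref{eq1} are broadly on target (the actual chain is
\begin{equation*}
\mathcal{L}(z,t)=\Bigl( \alpha \int_{0}^{e^{-st}z} g^{\alpha-1}(u)f'(u)\,du - \tfrac{a}{c}\bigl(e^{mt}-1\bigr)e^{-st}z\, g^{\alpha-1}(e^{-st}z)f'(e^{-st}z)h(e^{-st}z)\Bigr)^{1/\alpha},
\end{equation*}
with the dilation $e^{-st}z$ governed by the \emph{complex} parameter $s$, not $e^{-t}z$). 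However, there is a genuine gap at the heart of your step (d). You assert that \eqref{eq3} ``is exactly the statement that $\Re\,p(z,t)>0$,'' i.e.\ that after simplification $p(z,t)$ reduces to the bracketed expression of \eqref{eq3} with $|z|^{m/a}$ replaced by $e^{-mt}$. This identification fails for general $(z,t)$: the quantity that actually appears in the computation is
\begin{equation*}
A(z,t)=\frac{-c\alpha}{a\,h(e^{-st}z)}e^{-mt}+\bigl(1-e^{-mt}\bigr)\Bigl[(\alpha-1)\tfrac{\zeta g'(\zeta)}{g(\zeta)}+1+\tfrac{\zeta f''(\zeta)}{f'(\zeta)}+\tfrac{\zeta h'(\zeta)}{h(\zeta)}\Bigr]\Big|_{\zeta=e^{-st}z},
\end{equation*}
and since $|e^{-st}z|^{m/a}=e^{-mt}|z|^{m/a}$, the factor $e^{-mt}$ in $A(z,t)$ matches the factor $|\zeta|^{m/a}$ demanded by \eqref{eq3} \emph{only when} $|z|=1$. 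So hypothesis \eqref{eq3} controls $A(z,t)$ merely on the distinguished boundary, not throughout $\mathcal{U}\times I$.

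The missing ingredient is the maximum modulus argument that bridges this: one sets $B(z,t)=A(z,t)-\frac{m}{2a}$, notes that for fixed $t>0$ the function $B(\cdot,t)$ is analytic on the \emph{closed} disk (because $|e^{-st}z|\le e^{-at}<1$), and applies the maximum principle so that the boundary estimate $|B(e^{i\theta},t)|\le \frac{m}{2a}$ --- which is where \eqref{eq3} enters, via the substitution $u=e^{-st}e^{i\theta}$, $|u|^{m/a}=e^{-mt}$ --- propagates to $|B(z,t)|<\frac{m}{2a}$ inside. The cases $t=0$ and $z=0$ must be handled separately, and that is the actual job of \eqref{eq2} and of \eqref{eq1} respectively (you instead assign these hypotheses to the normal-family and branch-cut issues; the non-vanishing of the base of the $1/\alpha$ power at $z=0$ in fact comes from $c\notin[0,\infty)$ and $h_0\notin(-\infty,0]$). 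A further small correction: $|w(z,t)|<1$ with $w=\frac{(1+s)A-m}{(1-s)A+m}$ is equivalent to $\bigl|A-\frac{m}{2a}\bigr|<\frac{m}{2a}$ with $a=\Re(s)$, a M\"obius identity you would also need to establish; it is not the bare statement $\Re\,p>0$ applied to the bracket. Without the maximum modulus step your outline cannot close, so as written the proposal does not constitute a proof.
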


\begin{proof}
We first prove that there exists a real number $r\in \left(
{0,1}\right] $ such that the function
$\mathcal{L}:\mathcal{U}_{r}\times I\rightarrow
\mathbb{C},$ defined formally by%
\begin{equation}
\mathcal{L}(z,t)=\left( {\alpha \int\limits_{0}^{e^{-st}z}{{g^{\alpha -1}(u)}%
f'(u)du}}{-\frac{a}{c}\left( {e^{mt}-1}\right) e^{-st}z{g^{\alpha
-1}(e^{-st}z)f'}(e^{-st}z)h(e^{-st}z)}\right) ^{1/ {\alpha }%
},  \label{eq5}
\end{equation}%
is analytic in $\mathcal{U}_{r}$ for all $t\in I.$

Because $g\in \mathcal{A},$ the function%
\begin{equation*}
\phi (z)=\frac{{g(z)}}{z}=1+...
\end{equation*}%
\noindent is analytic in $\mathcal{U}$ and $\phi (0)=1$. Then there
exist a disc $\mathcal{U}_{r_{1}},$ $0<r_{1}\le 1,$ in which $\phi
(z)\neq 0$ for all $z\in \mathcal{U}_{r_{1}}.$We denote by $\phi
_{1}$ the uniform branch of $\left( \phi {(z)}\right) ^{\alpha -1}$
equal to 1 at the origin.

Consider now a function
\begin{equation*}
\phi _{2}(z,t)={\alpha }\int\limits_{0}^{e^{-st}z}{u^{\alpha -1}\phi
}_{1}(u)f'(u)du{=e}^{-st{\alpha }}{z^{{\alpha }}+...,}
\end{equation*}%
\noindent then we have%
\begin{equation*}
\phi _{2}(z,t)=z^{{\alpha }}\phi _{3}(z,t)
\end{equation*}%
\noindent where $\phi _{3}$ is also analytic in $\mathcal{U}_{r_{1}}.$
Hence, the function%
\begin{equation*}
\phi _{4}(z,t)=\phi _{3}(z,t)-\frac{a}{c}\left( {e^{mt}-1}\right) e^{-st{%
\alpha }}\phi _{1}(e^{-st}z)f'(e^{-st}z)h(e^{-st}z)
\end{equation*}%
\noindent is analytic in $\mathcal{U}_{r_{1}}$ and%
\begin{equation*}
\phi _{4}(0,t)=e^{-st{\alpha }}\left[ {\left( {1+\frac{a}{c}}h_{0}\right) -%
\frac{a}{c}h_{0}e^{mt}}\right] .
\end{equation*}%
Now, we prove that $\phi _{4}(0,t)\neq 0$ for all $t\in I.$ It is
easy to
see that $\phi _{4}(0,0)=1.$ Suppose that there exists $t_{0}>0$ such that $%
\phi _{4}(0,t_{0})=0$. Then the equality $e^{mt_{0}}=\frac{c+ah_{0}%
}{ah_{0}}$ holds.  Since $h_{0}\notin ({-\infty ,0],}$ this equality implies that $%
c>0,$ which contradicts $c\notin \left[ {0,\infty }\right) .$ From
this we conclude
that $\phi _{4}(0,t)\neq 0$ for all $t\in I.$ Therefore, there is a disk $%
\mathcal{U}_{r_{2}},\;r_{2}\in \left( {0,r_{1}}\right] ,$ in which $\phi
_{4}(z,t)\neq 0$ for all $t\in I.$ Thus, we can choose an uniform branch of $%
\left[ \phi {_{4}(z)}\right] ^{1/ {\alpha }}$ analytic in $\mathcal{U}%
_{r_{2}},$ and denoted by $\phi _{5}(z,t).$

It follows from \eqref{eq5} that%
\begin{equation*}
\mathcal{L}(z,t)=z\phi _{5}(z,t)=a_{1}(t)z+a_{2}(t)z^{2}+...
\end{equation*}%
\noindent and consequently the function $\mathcal{L}(z,t)$ is analytic in $\mathcal{U%
}_{r_{2}}$.

We note that%
\begin{equation*}
a_{1}(t)=e^{t\left( {\frac{m}{{\alpha }}-s}\right) }\left[ {\left( {1+\frac{a%
}{c}}h_{0}\right) e^{-mt}-\frac{a}{c}}h_{0}\right] ^{1/ {\alpha }},
\end{equation*}
for which we consider the uniform branch equal to $1$ at the origin.

Since $\left\vert {a\alpha -\frac{m}{2}}\right\vert <\frac{m}{2}$ is
equivalent to $\Re \left\{ {\frac{m}{{\alpha }}}\right\} >a=\Re
(s),$ we
have that%
\begin{equation*}
\underset{t\rightarrow \infty }{\lim }\left\vert {a_{1}(t)}\right\vert
=\infty .
\end{equation*}%
Moreover, $a_{1}(t)\neq 0$ for all $t\in I.$

From the analyticity of $\mathcal{L}(z,t)$ in $\mathcal{U}_{r_{2}},$
it follows that there exists a number $r_{3}$ such that
$0<r_{3}<r_{2},$ and a
constant $K=K(r_{3})$ such that%
\begin{equation*}
\left\vert {\frac{\mathcal{L}(z,t)}{a_{1}(t)}}\right\vert <K\quad (
z\in \mathcal{U}_{r_{3}},\ t\in I).
\end{equation*}%
By the Montel's Theorem, $\left\{ {\frac{\mathcal{L}(z,t)}{a_{1}(t)}}%
\right\} _{t\in I}$\ forms a normal family in $\mathcal{U}_{r_{3}}.$
From the analyticity of $\frac{\partial \mathcal{L}(z,t)}{\partial
t},$ it may be concluded  that for all fixed numbers $T>0$ and
$r_{4},\;0<r_{4}<r_{3},$ there exists a
constant $K_{1}>0$ (that depends on $T$ and $r_{4})$ such that%
\begin{equation*}
\left\vert {\frac{\partial \mathcal{L}(z,t)}{\partial t}}\right\vert
<K_{1}\quad (z\in \mathcal{U}_{r_{4}},\ t\in \left[ {0,T}\right]).
\end{equation*}%
Therefore, the function $\mathcal{L}(z,t)$ is locally absolutely continuous
in $I,$ locally uniform with respect to $\mathcal{U}_{r_{4}}.$

Let $p:\mathcal{U}_{r}\times I\rightarrow \mathbb{C}$ denote a function%
\begin{equation*}
p(z,t)={z\frac{\partial \mathcal{L}(z,t)}{\partial z}}/
\frac{\partial \mathcal{L}(z,t)}{\partial t},
\end{equation*} that is analytic  in $\mathcal{U}_{r},\;0<r<r_{4},$ for all $t\in
I$.
If the function%
\begin{equation}
w(z,t)=\frac{p(z,t)-1}{p(z,t)+1}=\frac{\frac{z\partial \mathcal{L}(z,t)}{%
\partial z}-\frac{\partial \mathcal{L}(z,t)}{\partial t}}{\frac{z\partial
\mathcal{L}(z,t)}{\partial z}+\frac{\partial \mathcal{L}(z,t)}{\partial t}}
\label{eq6}
\end{equation}%
\noindent is analytic in $\mathcal{U}\times I$, and $\left\vert {w(z,t)}%
\right\vert <1,$ for all $z\in \mathcal{U}\;$and $t\in I,$ then $p(z,t)$ has
an analytic extension with positive real part in $\mathcal{U},$ for all $%
t\in I.$ According to \eqref{eq6} we have%
\begin{equation}
w(z,t)=\frac{(1+s)A(z,t)-m}{(1-s)A(z,t)+m},  \label{eq7}
\end{equation}%
\noindent where%
\begin{eqnarray}
A(z,t) &=&\frac{-c{\alpha }}{ah(e^{-st}z)}e^{-mt}+\left( {1-e^{-mt}}\right) %
\left[ {(\alpha -1){\frac{e^{-st}zg'(e^{-st}z)}{g(e^{-st}z)}}}%
\right.  \label{eq8} \\
&&\left.
{{+}1+\frac{e^{-st}zf''(e^{-st}z)}f'(e^{-st}z)}+\frac{e^{-st}zh'(e^{-st}z)}{h(e^{-st}z)}\right]
\notag
\end{eqnarray}%
for $z\in \mathcal{U}$ and $t\in I.$ Hence, the inequality $|w(z,t)| <1$  is equivalent to%
\begin{equation}
\left\vert {A(z,t)-\frac{m}{2a}}\right\vert <\frac{m}{2a},\text{
}a=\Re (s) \quad (z\in \mathcal{U},\ t\in I).  \label{eq9}
\end{equation}%
Define now%
\begin{equation*}
B(z,t)=A(z,t)-\frac{m}{2a}\quad (z\in \mathcal{U},\ t\in I).
\end{equation*}%
From \eqref{eq1}, \eqref{eq2} and \eqref{eq8} it follows that%
\begin{equation}
\left\vert {B(z,0)}\right\vert =\left\vert {\frac{c{\alpha }}{ah(z)}+\frac{m%
}{2a}}\right\vert <\frac{m}{2a},  \label{eq10}
\end{equation}%
\noindent and%
\begin{eqnarray}
\left\vert {B(0,t)}\right\vert &=&\frac{1}{a}\left\vert \frac{{c\alpha
e^{-mt}}}{h_{0}}{-a\alpha \left( {1-e^{-mt}}\right) +\frac{m}{2}}\right\vert
\label{eq11} \\
&=&\frac{1}{a}\left\vert {\left( \frac{{c\alpha }}{h_{0}}{+\frac{m}{2}}%
\right) e^{-mt}+\left( {\frac{m}{2}-a\alpha }\right) \left( {1-e^{-mt}}%
\right) }\right\vert\\ & <&\frac{m}{2a}.  \notag
\end{eqnarray}%
Since $\left\vert {e^{-st}z}\right\vert \le \left\vert {e^{-st}}%
\right\vert =e^{-at}<1$ for all $z\in \overline{\mathcal{U}}=\left\{
{z\in \mathbb{C}:\;\|z| \le 1}\right\} $ and $t>0,$ we
conclude that for each $t>0$ $B(z,t)$ is an analytic function in $\overline{%
\mathcal{U}}.$ Using the maximum modulus principle it follows that for all $%
z\in \mathcal{U}\setminus\{0\}$ and each $t>0$ arbitrarily fixed there exists $%
\theta =\theta (t)\in \mathbb{R}$ such that%
\begin{equation}
\left\vert {B(z,t)}\right\vert <\underset{\left\vert z\right\vert =1}{\lim }%
\left\vert {B(z,t)}\right\vert =\left\vert {B(e^{i\theta },t)}\right\vert ,
\label{eq12}
\end{equation}%
\noindent for all $z\in \mathcal{U}\;$and $t\in I.$

Denote $u=e^{-st}e^{i\theta }.$ Then $\left\vert u\right\vert
=e^{-at}$, and from \eqref{eq8} we obtain%
\begin{eqnarray*}
\left\vert {B(e^{i\theta },t)}\right\vert &=&\left\vert {\frac{c{\alpha }}{%
ah(u)}|u| ^{m/ a}+\frac{m}{2a}}\right. \\
&&\left. {-\left( {1-\left\vert u\right\vert ^{m/ a}}\right) }\left[ {{%
(\alpha
-1){\frac{ug'(u)}{g(u)}}}}{{+{1+\frac{uf''(u)}{f'(u)}}+{\frac{uh'(u)}{h(u)}}}}\right]
\right\vert.
\end{eqnarray*}%
Since $u\in \mathcal{U},$ the inequality \eqref{eq3} implies that%
\begin{equation*}
\left\vert {B(e^{i\theta },t)}\right\vert \le \frac{m}{2a},
\end{equation*}%
and from \eqref{eq10}, \eqref{eq11} and \eqref{eq12}, we conclude that%
\begin{equation*}
\left\vert {B(z,t)}\right\vert =\left\vert {A(z,t)-\frac{m}{2a}}\right\vert <%
\frac{m}{2a}
\end{equation*}%
\noindent for all $z\in \mathcal{U}\;$and $t\in I.$ Therefore $\left\vert {%
w(z,t)}\right\vert <1$ for all $z\in \mathcal{U}\;$and $t\in I.$

Since all the conditions of Theorem 2.1 are satisfied, we obtain that the
function $\mathcal{L}(z,t)$ has an analytic and univalent extension to the
whole unit disk $\mathcal{U},$ for all $t\in I.$ For $t=0$ we have $\mathcal{%
L}(z,0)=G_{\alpha }(z),$ for $z\in \mathcal{U}$ and therefore the function $%
G_{\alpha }(z)$ is analytic and univalent in $\mathcal{U}.$
\end{proof}

Abbreviating \eqref{eq3} we can now rephrase Theorem \ref{T2} in a
simpler form.

\begin{theorem}
\label{T21}\textit{\ Let } $f,g\in \mathcal{A}.$ \textit{Let }$m>0$,
the complex numbers $\alpha ,\;c,$ $s$ and the function $h$\textit{\
be as in }Theorem \ref{T2}. Moreover, suppose that the inequalities
\eqref{eq1} and \eqref{eq2} are satisfied. If the inequality
\begin{equation}
\left\vert(\alpha-1)\frac{zg'(z)}{g(z)}+1+\frac{zf''(z)}{f'(z)}+\frac{zh'(z)}{h(z)}-
\frac{m}{2a}\right\vert \le \frac{m}{2a}  \label{eq211}
\end{equation}%
holds true for all  $z\in \mathcal{U},$ then the function $%
G_{\alpha }$ defined by  \eqref{eq4}  is analytic and univalent in
$\mathcal{U}.$
\end{theorem}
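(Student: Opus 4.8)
The plan is to deduce Theorem \ref{T21} directly from Theorem \ref{T2} by showing that, under the standing hypotheses \eqref{eq1}--\eqref{eq2}, the single inequality \eqref{eq211} already forces the more elaborate inequality \eqref{eq3}. To organize the argument I would first introduce abbreviations: write
\[
P(z)=(\alpha-1)\frac{zg'(z)}{g(z)}+1+\frac{zf''(z)}{f'(z)}+\frac{zh'(z)}{h(z)},\qquad Q(z)=\frac{-c\alpha}{a\,h(z)},
\]
and set $x=|z|^{m/a}$. Since $a>0$ and $m>0$, for every $z\in\mathcal U$ we have $x\in[0,1)$. With this notation the bracketed expression in \eqref{eq3} is exactly $P(z)$, the coefficient of the $|z|^{m/a}$-term is $Q(z)$, and the whole left-hand side of \eqref{eq3} reads $\bigl|\,xQ(z)+(1-x)P(z)-\tfrac{m}{2a}\,\bigr|$.

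The first substantive step is to recast the hypothesis \eqref{eq2} as a disk condition on $Q$. A direct computation gives $Q(z)-\tfrac{m}{2a}=-\tfrac{\alpha}{a}\bigl(\tfrac{c}{h(z)}+\tfrac{m}{2\alpha}\bigr)$, whence $\bigl|Q(z)-\tfrac{m}{2a}\bigr|=\tfrac{|\alpha|}{a}\bigl|\tfrac{c}{h(z)}+\tfrac{m}{2\alpha}\bigr|$; by \eqref{eq2} the right-hand side is $<\tfrac{m}{2a}$. On the other hand, \eqref{eq211} is verbatim the statement $\bigl|P(z)-\tfrac{m}{2a}\bigr|\le\tfrac{m}{2a}$. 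Thus both $Q(z)$ and $P(z)$ lie in the closed disk $\Delta$ centered at $m/(2a)$ of radius $m/(2a)$, for every $z\in\mathcal U$.

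It then remains to observe that the quantity controlled by \eqref{eq3} is a convex combination of the two points $Q(z),P(z)\in\Delta$. Indeed, since $x+(1-x)=1$,
\[
xQ(z)+(1-x)P(z)-\frac{m}{2a}=x\Bigl(Q(z)-\frac{m}{2a}\Bigr)+(1-x)\Bigl(P(z)-\frac{m}{2a}\Bigr),
\]
so that by the triangle inequality, together with $x,1-x\ge0$, the modulus is at most $x\cdot\tfrac{m}{2a}+(1-x)\cdot\tfrac{m}{2a}=\tfrac{m}{2a}$, which is precisely \eqref{eq3}. Hence all three inequalities \eqref{eq1}, \eqref{eq2}, \eqref{eq3} of Theorem \ref{T2} hold, and that theorem yields the analyticity and univalence of $G_\alpha$. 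I do not expect any genuine obstacle: the entire content is the (elementary) convexity of $\Delta$ and the fact that $|z|^{m/a}$ is a real number in $[0,1)$, so that the expression in \eqref{eq3} always stays between the two endpoints $Q(z)$ and $P(z)$. The only point requiring a moment's care is the algebraic verification that \eqref{eq2} is equivalent to the disk membership $|Q(z)-m/(2a)|<m/(2a)$, after which the triangle inequality does all the work.
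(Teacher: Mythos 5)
Your proposal is correct and is essentially the paper's own argument: the paper likewise writes the left-hand side of \eqref{eq3} as a convex combination (with weights $|z|^{m/a}$ and $1-|z|^{m/a}$) of a term controlled by \eqref{eq2} and a term controlled by \eqref{eq211}, and concludes by the triangle inequality before invoking Theorem \ref{T2}. The only cosmetic difference is that the paper multiplies through by $a$ and flips a sign before applying the same estimate, whereas you work directly with $Q(z)=-c\alpha/(a h(z))$ and the disk centered at $m/(2a)$.
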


\begin{proof}
Making use of \eqref{eq2} and \eqref{eq211} we obtain%
\begin{eqnarray*}
&&\left\vert {\frac{c{\alpha }}{h(z)}\left\vert z\right\vert ^{m/ a}+%
\frac{m}{2}-a\left( {1-\left\vert z\right\vert ^{m/ a}}\right)
}\left[ {(\alpha
-1){\frac{zg'(z)}{g(z)}}}+{1+\frac{zf''(z)}{f'(z)}}+{{{\frac{zh'(z)}{h(z)}}}}\right]
\right\vert \\
&=&\left\vert \left( {\frac{c{\alpha }}{h(z)}+\frac{m}{2}}\right) {%
\left\vert z\right\vert ^{m/ a}}\right. \\
&&\left. +{\left( {1-\left\vert z\right\vert ^{m/ a}}\right) }\left[
-a\left( {(\alpha -1){\frac{zg'(z)}{g(z)}}}+{1+\frac{zf''(z)}{f'(z)}}+{{{\frac{zh'(z)}{h(z)}}}}\right) {+%
\frac{m}{2}}\right] \right\vert \\
&\leq &{\left\vert z\right\vert ^{m/ a}\frac{m}{2}+\left( {%
1-\left\vert z\right\vert ^{m/ a}}\right) \frac{m}{2}=}\frac{m}{2},
\end{eqnarray*}%
so that the condition \eqref{eq3} is satisfied. This finishes the
proof, since all the assumption of Theorem \ref{T2} are satisfied.
\end{proof}

The special case of Theorem \ref{T2} i.e. for $s=\alpha =1$, and
$h(z)=-c$ leads to the following result.

\begin{corollary}
\label{Ce1} \textit{Let } $f\in \mathcal{A}$ and $m>1.$ If%
\begin{equation}
\left\vert \frac{m-2}{2}-{\left( {1-\left\vert z\right\vert
^{m}}\right) \frac{zf''(z)}{f'(z)}}\right\vert \le \frac{m}{2}
\label{e4}
\end{equation}%
holds for $z\in \mathcal{U}$, then the function $f$\ %
univalent in $\mathcal{U}$.
\end{corollary}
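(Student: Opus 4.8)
The plan is to obtain Corollary \ref{Ce1} as a direct specialization of Theorem \ref{T2}, taking $s=\alpha=1$ (so that $a=\Re(s)=1$ and $b=0$) and the constant function $h(z)\equiv-c$, where $c$ is any complex number with $c\notin[0,\infty)$. First I would check that these choices are admissible: since $h$ is constant we have $h_0=h(0)=-c$, and the hypothesis $c\notin[0,\infty)$ is exactly the requirement $h_0=-c\notin(-\infty,0]$, so $h$ is of the type allowed in Theorem \ref{T2}.

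Next I would verify the two standing inequalities \eqref{eq1} and \eqref{eq2}. With $\alpha=1$ and $a=1$, inequality \eqref{eq1} reads $\left|1-\frac{m}{2}\right|<\frac{m}{2}$, which a short case distinction ($m\ge 2$ versus $1<m<2$) shows is equivalent to $m>1$; this is precisely the hypothesis of the corollary. For \eqref{eq2}, substituting $\alpha=1$ and $h(z)\equiv-c$ gives $\left|\frac{c}{-c}+\frac{m}{2}\right|=\left|\frac{m}{2}-1\right|<\frac{m}{2}$, the same condition, again satisfied for $m>1$.

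The heart of the argument is to show that condition \eqref{eq3} collapses to \eqref{e4}. Here I would exploit three simplifications at once: the factor $(\alpha-1)=0$ annihilates the $\frac{zg'(z)}{g(z)}$ term (so the result is in fact independent of $g$), the constancy of $h$ forces $\frac{zh'(z)}{h(z)}=0$, and $\frac{-c\alpha}{ah(z)}=\frac{-c}{-c}=1$. After collecting the $|z|^{m}$ terms, the expression inside the modulus in \eqref{eq3} reduces to $|z|^{m}+(1-|z|^{m})\bigl[1+\frac{zf''(z)}{f'(z)}\bigr]-\frac{m}{2}=\frac{2-m}{2}+(1-|z|^{m})\frac{zf''(z)}{f'(z)}$, which is the negative of the quantity appearing in \eqref{e4}; since $|{-X}|=|X|$, condition \eqref{eq3} becomes exactly \eqref{e4}.

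Finally I would read off the conclusion: for $\alpha=1$ the operator in \eqref{eq4} is $G_{1}(z)=\int_{0}^{z}f'(u)\,du=f(z)$ (using $f(0)=0$), so the univalence of $G_{\alpha}$ granted by Theorem \ref{T2} is precisely the univalence of $f$. No genuine obstacle is anticipated; the only points requiring care are the elementary case analysis establishing the equivalence of \eqref{eq1} with $m>1$ and the bookkeeping in the algebraic reduction of \eqref{eq3}, where the sign reversal permitted by the modulus must be noted.
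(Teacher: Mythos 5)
Your proposal is correct and follows exactly the route the paper intends: the paper gives no separate proof of Corollary \ref{Ce1}, merely remarking that it is the special case $s=\alpha=1$, $h(z)=-c$ of Theorem \ref{T2}, and your verification of \eqref{eq1}, \eqref{eq2}, the reduction of \eqref{eq3} to \eqref{e4} (up to the sign absorbed by the modulus), and the identification $G_{1}=f$ supplies precisely the omitted details. No discrepancy with the paper's approach.
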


Corollary \ref{Ce1} in turn implies the well-known Becker's
univalence citerion \cite{Be}.

\begin{remark}
\label{R1} Important examples of univalence criteria may be obtained
by a suitable choices of $f$ and $g$, below.

\begin{enumerate}
\item Choose $g_{1}(z)=z$. Then Theorem \ref{T2} gives analyticity
and univalence of the operator
\begin{equation*}
F(z)=\left[ {\alpha \int\limits_{0}^{z}{u^{\alpha -1}f'(u)du}}%
\right] ^{1/ \alpha },
\end{equation*}%
which was studied by Pascu \cite{Pa}.

\item Setting $f(z)=z$ in Theorem \ref{T2}, we obtain that the operator%
\begin{equation*}
G(z)=\left[ {\alpha \int\limits_{0}^{z}{g^{\alpha -1}(u)du}}\right]
^{1/ \alpha }
\end{equation*}%
is analytic and univalent in $\mathcal{U}.$ The operator $G$ was
introduced by Moldoveanu and Pascu \cite{MoPa1}.

\item Taking $f'(z)=\frac{g(z)}{z}$ in Theorem \ref{T2} we
find that%
\begin{equation*}
H(z)=\left[ {\alpha \int\limits_{0}^{z}{\frac{g^{\alpha
}(u)}{u}du}}\right] ^{1/ \alpha }
\end{equation*}%
is analytic and univalent in $\mathcal{U}$. The operator $H$ was
introduced and studied by Mocanu \cite{Mo}.
\end{enumerate}
\end{remark}

If we limit a range of parameter $a$ to the case $a\ge 1$ then,
applying the Theorem \ref{T2}, we obtain the following.

\begin{theorem}
\label{T3}  Let $\alpha ,\;c\;$and $s$  be complex numbers, that
$c\notin \left[ {0,\infty }\right) ;$ $s=a+ib,$ $a\geq 1,$ $b\in
\mathbb{R};$ $m>0$ and $f,g\in \mathcal{A}.$  Let the function $h$ \
be as in Theorem \ref{T2}. Moreover, suppose that
the inequalities \eqref{eq1} and \eqref{eq2} are satisfied. If the inequality%
\begin{equation}
\left\vert {\frac{-c\alpha }{ah(z)}\left\vert z\right\vert ^{m}+\left( {%
1-\left\vert z\right\vert ^{m}}\right) }\left[ {(\alpha -1){\frac{zg'(z)}{g(z)}}}+{1+\frac{zf''(z)}{f'(z)}}+%
{{{\frac{zh'(z)}{h(z)}}}}\right] {-\frac{m}{2a}}\right\vert \le
\frac{m}{2a}  \label{eq18}
\end{equation}%
holds true for all $z\in \mathcal{U},$ then the function $%
G_{\alpha }(z)$ defined by \eqref{eq4}  is analytic and univalent in
$\mathcal{U}.$
\end{theorem}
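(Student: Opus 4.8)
The plan is to show that, under the hypotheses of Theorem~\ref{T3}, the inequality \eqref{eq18} together with \eqref{eq2} forces the finer condition \eqref{eq3} of Theorem~\ref{T2}; the analyticity and univalence of $G_\alpha$ then follow at once by invoking Theorem~\ref{T2}. This is the same kind of reduction used in the proof of Theorem~\ref{T21}, the only new ingredient being a convexity observation that exploits the restriction $a\ge 1$.

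First I would fix $z\in\mathcal U$ and abbreviate
\[
P(z)=(\alpha-1)\frac{zg'(z)}{g(z)}+1+\frac{zf''(z)}{f'(z)}+\frac{zh'(z)}{h(z)},
\]
introducing the auxiliary affine function
\[
\Phi(\rho)=\frac{-c\alpha}{a\,h(z)}\,\rho+(1-\rho)\,P(z)-\frac{m}{2a},\qquad \rho\in[0,1].
\]
The decisive point is that the left-hand sides of \eqref{eq3} and \eqref{eq18} are exactly $|\Phi(|z|^{m/a})|$ and $|\Phi(|z|^{m})|$. Since $\Phi$ is affine in $\rho$, the map $\rho\mapsto|\Phi(\rho)|$ is convex, so it suffices to control $|\Phi|$ at two reference values of $\rho$ that bracket $|z|^{m/a}$.

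Next I would evaluate the two endpoints. At $\rho=|z|^m$ the hypothesis \eqref{eq18} gives $|\Phi(|z|^m)|\le m/(2a)$ directly. At $\rho=1$ one computes $\Phi(1)=-c\alpha/(a\,h(z))-m/(2a)$, so that $|\Phi(1)|=\tfrac1a\bigl|\tfrac{c\alpha}{h(z)}+\tfrac m2\bigr|$; multiplying \eqref{eq2} through by $|\alpha|$ shows this is $<m/(2a)$. The extra assumption $a\ge1$ enters now: for $|z|<1$ it yields $|z|^{m}\le|z|^{m/a}\le1$, so $|z|^{m/a}$ is a convex combination of $|z|^m$ and $1$. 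Writing $|z|^{m/a}=\lambda|z|^m+(1-\lambda)$ with $\lambda=\frac{1-|z|^{m/a}}{1-|z|^m}\in[0,1]$ and using that $\Phi$ is affine, the triangle inequality gives
\[
|\Phi(|z|^{m/a})|\le\lambda\,|\Phi(|z|^m)|+(1-\lambda)\,|\Phi(1)|\le\frac{m}{2a},
\]
which is precisely \eqref{eq3}.

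There is no real obstacle beyond spotting the affine (hence convex) structure in $\rho$: once \eqref{eq3} holds for every $z\in\mathcal U$, all the hypotheses of Theorem~\ref{T2} are satisfied and the conclusion is immediate. I would only flag that the argument genuinely requires $a\ge1$: were $a<1$ one would instead have $|z|^{m/a}\le|z|^m$, forcing the use of the reference endpoint $\rho=0$, whose value $\Phi(0)=P(z)-m/(2a)$ is not controlled by the present assumptions \eqref{eq1} and \eqref{eq2}, so the reduction would break down.
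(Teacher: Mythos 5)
Your proposal is correct and follows essentially the same route as the paper: the paper likewise views the quantity as an affine function of the interpolation parameter $\lambda$ (its $\phi(z,\lambda)=\lambda k(z)+(1-\lambda)l(z)$ equals $-a\,\Phi(\lambda)$ in your notation), bounds it at the endpoints $\lambda=|z|^m$ via \eqref{eq18} and $\lambda=1$ via \eqref{eq2}, and uses $a\ge 1$ to place $|z|^{m/a}$ on the segment between them, so that convexity of the disc yields \eqref{eq3} and Theorem \ref{T2} applies. Your write-up merely makes the convex-combination coefficient $\lambda=\frac{1-|z|^{m/a}}{1-|z|^{m}}$ explicit, which the paper leaves implicit.
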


\begin{proof}
For $\lambda \in \left[ {0,1}\right]$ define the linear function%
\begin{equation*}
\phi (z,\lambda )=\lambda k(z)+\left( {1-\lambda }\right) l(z),\text{ \ \ }%
(z\in \mathcal{U},\;t\in I)
\end{equation*}%
\noindent where%
\begin{equation*}
k(z)=\frac{c\alpha }{h(z)}+\frac{m}{2}
\end{equation*}%
\noindent and%
\begin{equation*}
l(z)=-a\left[ {(\alpha -1){\frac{zg'(z)}{g(z)}}}+{1+\frac{zf''(z)}{f'(z)}}+{{\frac{zh'(z)}{h(z)}}}%
\right] +\frac{m}{2}.
\end{equation*}%
For fixed $z\in \mathcal{U}$ and $t\in I,$ $\phi (z,\lambda )$ is a
point of a segment with endpoints at $k(z)$ and $l(z).$ The function
$\phi (z,\lambda )$ is analytic in $\mathcal{U}$ for all $\lambda
\in \left[ {0,1}\right]$ and $z\in \mathcal{U},$ satisfies
\begin{equation}
|phi (z,1)| =|k(z)| <\frac{m}{2} \label{eq19}
\end{equation}%
\noindent and%
\begin{equation}
\left\vert {\phi (z,\left\vert z\right\vert ^{m})}\right\vert \le
\frac{m}{2},  \label{eq20}
\end{equation} which follows from \eqref{eq2} and \eqref{eq18}.
If $\lambda $ increases from $\lambda _{1}=\left\vert z\right\vert ^{m}$ to $%
\lambda _{2}=1,$ then the point $\phi (z,\lambda )$ moves on the segment
whose endpoints are $\phi (z,\left\vert z\right\vert ^{m})$ and $\phi (z,1).$
Because $a\ge 1,$ from \eqref{eq19} and \eqref{eq20} it follows that%
\begin{equation}
\left\vert {\phi (z,\left\vert z\right\vert ^{m/ a})}\right\vert \le
\frac{m}{2},\quad z\in \mathcal{U}. \label{eq21}
\end{equation}%
We can observe that the inequality \eqref{eq21} is just the
condition \eqref{eq3}, and then  Theorem \ref{T2} now yields that
the function $G_{\alpha }(z)$, defined by \eqref{eq4}, is analytic
and univalent in $\mathcal{U}.$
\end{proof}

\begin{remark}\label{R4}
Applying Theorem \ref{T3} to $m=2$ and the function $h(z)\equiv 1$,
and  $g(z)=f(z), \alpha =1/ s$ (or $g(z)=z,$  $a=1,$
$c=-\frac{1}{\alpha }$, respectively) we obtain the results by
Ruscheweyh \cite{Ru}  (or Moldoveanu and Pascu \cite{MoPa2},
resp.).\end{remark}

\begin{remark}
\label{R5}  Substituting $1/ h$ instead of $h$ with $h(0)=1$ and
setting $g(z)=f(z), \;\alpha =1/ s, m=2$  in Theorem \ref{T3} we
obtain the result due to Singh and Chichra \cite{SiCh}.
\end{remark}

\begin{remark}
\label{R6}  Setting $g(z)=f(z),\;s=\alpha =1,\;c=-1,\;m=2$ and
$h(z)=\frac{k(z)+1}{2},$ where $k$ is an analytic function with
positive real part in $\mathcal{U}$ with $k(0)=1$ in Theorem
\ref{T3}, we obtain the result  by Lewandowski
\cite{Le}.\end{remark}

\begin{remark}
\label{R7} For the case when $m=2$ and $h(0)=h_{0}=1$ Theorem
\ref{T2} and \ref{T3} reduce to the results by Ovesea \cite{Ov1}.
\end{remark}

\section{\textbf{Quasiconformal extension criterion}}

In this section we will refine the univalence condition given in Theorem \ref%
{T2} to a quasiconformal extension criterion.

\begin{theorem}
\label{T5}  Let $\alpha ,\;c\;$and $s$  be complex numbers, that
$c\notin \left[ {0,\infty }\right) ;$ $s=a+ib,$ $a>0,$ $b\in
\mathbb{R} ;$ $m>0;$ $k\in \lbrack 0,1)$ and let $f,g\in
\mathcal{A}.$ If there exists a function $h$,  analytic
in $\mathcal{U}$, such that $h(0)=h_{0},$ $h_{0}\in \mathbb{C},$ $h_{0}\notin ({-\infty ,0]}$ and the inequalities %
\begin{equation*}
\left\vert {\alpha -\frac{m}{2a}}\right\vert <\frac{m}{2a},
\end{equation*}%
\begin{equation}
\left\vert {\frac{c{\alpha }}{h(z)}+\frac{m}{2}}\right\vert <k\frac{m}{2}
\label{4.1}
\end{equation}%
\textit{and}%
\begin{equation}
\left\vert {\frac{-c{\alpha }}{ah(z)}\left\vert z\right\vert ^{m/
a}+\left( {1-\left\vert z\right\vert ^{m/ a}}\right) }\left[
{(\alpha
-1){\frac{zg'(z)}{g(z)}}}+{1+\frac{zf''(z)}{f'(z)}}+{{{\frac{zh'(z)}{h(z)}}}}\right] {-\frac{m}{2a}}%
\right\vert \le k\frac{m}{2a}  \label{4.112}
\end{equation}%
hold true for all $z\in \mathcal{U},$ then the function $%
G_{\alpha }(z)$ given by \eqref{eq4} has an $K-$quasiconformal extension to $%
\mathbb{C},$ where
\begin{equation}
K=\left\{\begin{array}{lcl}
k & for & s=1, \\
\dfrac{| s-1| ^{2}+k|\bar{s}^{2}-1|}{|\bar{s}^{2}-1| +k|s-1|^{2}} &
for & s\neq 1.
\end{array}
\right.  \label{4.111}
\end{equation}
\end{theorem}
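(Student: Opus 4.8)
The plan is to rerun the Loewner-chain construction from the proof of Theorem~\ref{T2}, keeping the \emph{same} family $\mathcal{L}(z,t)$ of \eqref{eq5} and the \emph{same} auxiliary functions $p(z,t)=z\,\partial_z\mathcal{L}(z,t)\big/\partial_t\mathcal{L}(z,t)$ and $w(z,t)=(p(z,t)-1)/(p(z,t)+1)$, so that $w$ again has the form \eqref{eq7}, namely $w(z,t)=\bigl((1+s)A(z,t)-m\bigr)\big/\bigl((1-s)A(z,t)+m\bigr)$ with $A$ given by \eqref{eq8}. The verification of conditions (i) and (ii) of Theorem~\ref{T1} (normality of the family and the growth of $a_{1}(t)$) involves no $k$ and is carried out exactly as in the proof of Theorem~\ref{T2}; the whole point of the refinement is therefore to replace the crude estimate $|w|<1$ by the sharp bound $|w(z,t)|\le K$.

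Next I would show that \eqref{4.1} and \eqref{4.112} force $A(z,t)$ into the \emph{smaller} disk $|A(z,t)-\frac{m}{2a}|\le k\frac{m}{2a}$ for every $z\in\mathcal{U}$ and $t\in I$. Writing $B(z,t)=A(z,t)-\frac{m}{2a}$ as before: at $t=0$ one has $A(z,0)=-c\alpha/(a\,h(z))$, so $|B(z,0)|=\frac1a|\frac{c\alpha}{h(z)}+\frac m2|\le k\frac{m}{2a}$ directly from \eqref{4.1}; for $t>0$ the function $B(\cdot,t)$ is analytic on $\overline{\mathcal{U}}$ (since $|e^{-st}z|=e^{-at}|z|<1$ there), so by the maximum-modulus principle its modulus is controlled by its boundary values, and on $|z|=1$ the point $u=e^{-st}e^{i\theta}$ lies in $\mathcal{U}$, whence \eqref{4.112} gives $|B(e^{i\theta},t)|\le k\frac{m}{2a}$. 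This is precisely the argument of \eqref{eq9}--\eqref{eq12}, but with $k\frac{m}{2a}$ in place of $\frac{m}{2a}$, and it covers $z=0$ automatically.

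The heart of the proof is the passage from this disk to the bound on $|w|$. Putting $\beta=\frac{2a}{m}\bigl(A-\frac{m}{2a}\bigr)$, the disk becomes $|\beta|\le k$ and, using $s+\bar s=2a$, a short computation turns $w$ into $w=\dfrac{(1+s)\beta+(1-\bar s)}{(1-s)\beta+(1+\bar s)}$. For $s=1$ this collapses to $w=\beta$, so $|w|\le k=K$. For $s\neq1$ it is a genuine M\"obius map (its determinant equals $4a\neq0$) carrying the disk $|\beta|\le k$ onto a disk; I would compute that image disk explicitly — its centre is $\dfrac{(1-k^{2})(1+s)(1-\bar s)}{|1+s|^{2}-k^{2}|1-s|^{2}}$ and its radius is $\dfrac{4ak}{|1+s|^{2}-k^{2}|1-s|^{2}}$ — and then evaluate $\max|w|=|\text{centre}|+\text{radius}$. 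Using $|1+s|\,|1-s|=|\bar s^{2}-1|$ and $|1+s|^{2}-|1-s|^{2}=4a$, the resulting quotient factors and collapses exactly to $K=\bigl(|s-1|^{2}+k|\bar s^{2}-1|\bigr)\big/\bigl(|\bar s^{2}-1|+k|s-1|^{2}\bigr)$ of \eqref{4.111}. This extremal M\"obius computation is the step I expect to be the main obstacle.

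Finally, since $a>0$ gives $|1-s|<|1+s|$, a one-line check shows $K<1$; hence $\Re p(z,t)>0$ and, by Theorem~\ref{T1}, $\mathcal{L}(z,t)$ is a Loewner chain, while the bound $|w(z,t)|\le K$ says precisely that $p(z,t)\in U(K)$ for all $z\in\mathcal{U}$, $t\in I$. Becker's Theorem~\ref{T11} then furnishes a $K$-quasiconformal extension of $\mathcal{L}(z,0)=G_{\alpha}(z)$ to $\mathbb{C}$, which is the assertion.
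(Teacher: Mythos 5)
Your proposal is correct, and its overall skeleton coincides with the paper's: reuse the chain $\mathcal{L}(z,t)$ of \eqref{eq5} and the function $w(z,t)$ of \eqref{eq7}, show that \eqref{4.1} and \eqref{4.112} confine $A(z,t)$ to the disk $\left\vert A-\tfrac{m}{2a}\right\vert\le k\tfrac{m}{2a}$ (the paper's \eqref{4.4}), and then feed the resulting bound on $w$ into Becker's Theorem \ref{T11}. Where you genuinely diverge is in the extraction of the constant $K$. The paper works backwards: it fixes a candidate $l$, rewrites $|w|<l$ as the disk condition \eqref{4.3} on $A$, and then solves the inclusion $\Delta_2\subset\Delta_1$ via $|s_1-s_2|+r_2\le r_1$, which produces the two quadratic inequalities \eqref{4.5}--\eqref{4.6}, the auxiliary roots $l_2$ and $l_3$ in \eqref{4.9} and \eqref{4.11}, and the verification $l_2\le l_3<1$. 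You instead push the disk forward: after the substitution $\beta=\tfrac{2a}{m}\bigl(A-\tfrac{m}{2a}\bigr)$ and the identity $s+\bar s=2a$, you recognize $w=\bigl((1+s)\beta+(1-\bar s)\bigr)/\bigl((1-s)\beta+(1+\bar s)\bigr)$ as a M\"obius map of $\beta$ with determinant $4a$, compute the image of $\left\vert\beta\right\vert\le k$ explicitly, and read off $\sup|w|$ as centre plus radius; I checked that with $|1+s|^2-|1-s|^2=4a$ and $|1+s|\,|1-s|=|\bar s^2-1|$ this simplifies exactly to the $K$ of \eqref{4.111}, and the degenerate case $s=1$ gives $w=\beta$, hence $K=k$. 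Your route is the more direct one: it delivers the sharp value of $\sup|w|$ in one computation and makes it evident that $K$ cannot be improved within this estimate, whereas the paper's route needs the extra consistency condition \eqref{4.6} and the comparison of the two roots; the paper's version, on the other hand, avoids the general image-of-a-disk formula for M\"obius maps and stays with elementary disk-inclusion geometry. Both arguments are valid and yield the same constant.
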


\begin{proof}
In the proof of Theorem \ref{T2} it has been shown  that the function $\mathcal{L%
}(z,t)$, given by \eqref{eq5}, is a subordination chain in
$\mathcal{U}$. Applying Theorem \ref{T11} to the function $w(z,t)$
given by \eqref{eq7}, we obtain that the condition
\begin{equation}
\left\vert \frac{(1+s)A(z,t)-m}{(1-s)A(z,t)+m}\right\vert <l\qquad
(z\in \mathcal{U},\ t\in I,\ 0\le l< 1) \label{4.2}
\end{equation}%
with $A(z,t)$  defined by \eqref{eq8}, implies $l-$quasiconformal
extensibility of $G_{\alpha }(z)$. Lengthy, but elementary
calculations, show that the inequality \eqref{4.2} is equivalent to
\begin{equation}
\left| A(z,t)-\frac{m\left( (1+l^{2})+a(1-l^{2})-ib(1-l^{2})\right) }{%
2a(1+l^{2})+(1-l^{2})(1+|s|^{2})}\right| \leq \frac{%
2lm}{2a(1+l^{2})+(1-l^{2})(1+|s|^{2})}. \label{4.3}
\end{equation}
Taking  into account \eqref{4.1} and \eqref{4.112}, we clearly see
that
\begin{equation}
\left\vert A(z,t)-\frac{m}{2a}\right\vert \leq k\frac{m}{2a}.  \label{4.4}
\end{equation}
Consider the two disks $\Delta_1(s_1,r_1)$ and $\Delta_2(s_2,r_2)$
defined by \eqref{4.3} and \eqref{4.4} respectively, where $A(z,t)$
is replaced by a complex variable $w$. The proof is completed by
showing that there exists $l\in \lbrack 0,1)$ for which $\Delta_2$
is contained in $\Delta_1$. Equivalently $\Delta_2\subset \Delta_1$
holds, if $|s_1-s_2|+r_2\le r_1$, that is
\begin{equation*}
\left\vert \frac{m\left( (1+l^{2})+a(1-l^{2})\right) -imb(1-l^{2})}{%
2a(1+l^{2})+(1-l^{2})(1+\left\vert s\right\vert ^{2})}-\frac{m}{2a}%
\right\vert +k\frac{m}{2a}\leq \frac{2lm}{2a(1+l^{2})+(1-l^{2})(1+\left\vert
s\right\vert ^{2})}
\end{equation*}%
or
\begin{equation}
\frac{(1-l^{2})|\bar{s}^{2}-1|}{2a\left[
2a(1+l^{2})+(1-l^{2})(1+|s|^{2})\right] }\leq \frac{2l}{%
2a(1+l^{2})+(1-l^{2})(1+|s|^{2})}-\frac{k}{2a} \label{4.5}
\end{equation}%
with the condition
\begin{equation}
\frac{2l}{2a(1+l^{2})+(1-l^{2})(1+\left\vert s\right\vert ^{2})}-\frac{k}{2a}%
\geq 0.  \label{4.6}
\end{equation}%
For the case,  when $k=0$, the condition \eqref{4.6} holds for every
$l$, while \eqref{4.5} is satisfied for $l_1\le l<1$, where
\begin{equation}\label{4.7}
l_1=\frac{|s-1|^2}{|\bar{s}^2-1|}.
\end{equation}
If, on the other hand, $s=1$ and $k\in (0,1)$, then \eqref{4.6} and
\eqref{4.5} hold for $k\le l<1$. Assume now $s\neq 1$, and $k\in
(0,1)$. The condition \eqref{4.6} reduces to the quadratic
inequality
$$l^2\left[k(1+|s|^2)-2ak\right]+4al-k[2a+1+|s|^2]\ge 0,$$ or
\begin{equation}\label{4.8}
kl^2|s-1|^2+4al-k|s+1|^2\ge 0.
\end{equation}
Therefore, we find that \eqref{4.6} (or \eqref{4.8}) holds for
$l_2\le l<1$, where
\begin{equation}\label{4.9}
l_2=\frac{\sqrt{4a^2+k^2|\bar{s}^2-1|^2}-2a}{k|s-1|^2}.
\end{equation}
Similarly, \eqref{4.6} may be rewritten as
$$(1-l^2)|\bar{s}^2-1|\le 4al-2ak(1+l^2)-k(1-l^2)(1+|s|^2),$$ or
\begin{equation}\label{4.10}
 l^2\left[k|s-1|^2+|\bar{s}^2-1|\right]+4al-k|s+1|^2-|\bar{s}^2-1|\ge
 0,
\end{equation}
that is satisfied for $l_3\le l<1$, where
\begin{equation}\label{4.11}
l_3=\frac{| s-1| ^{2}+k|\bar{s}^{2}-1|}{|\bar{s}^{2}-1|
+k|s-1|^{2}}.
\end{equation}
We note that $l_2\le l_3$. Indeed, it is trivial that
$$\left[|\bar{s}^{2}-1|+k|s-1|^{2}\right]\sqrt{4a^2+k^2|\bar{s}^{2}-1|^2}\le
\left[|\bar{s}^{2}-1|+k|s-1|^{2}\right]\left[2a+k
|\bar{s}^2-1|\right].$$ Moreover, we see at once that
$$\left[|\bar{s}^{2}-1|+k|s-1|^{2}\right]\left[2a+k
|\bar{s}^{2}-1|\right]\le
\left[|\bar{s}^{2}-1|+k|s-1|^{2}\right]\left[2a+k
|\bar{s}^2-1|\right]+4ak|s-1|^2.$$ Combining the last two
inequalities we obtain
$$\left[|\bar{s}^{2}-1|+k|s-1|^{2}\right]\sqrt{4a^2+k^2|\bar{s}^{2}-1|^2}\le
\left[|\bar{s}^{2}-1|+k|s-1|^{2}\right]\left[2a+k
|\bar{s}^{2}-1|\right]+4ak|s-1|^2,$$ which is equivalent to the
desired inequality $l_2\le l_3$.  Likewise, it is a simple matter to
show that $l_3<1$, and the proof is complete, by setting $K:=l_3$.
We note also, that the case $k=0$ may be included to the last case
(i.e. $s\neq 1$).
\end{proof}

Several similar sufficient conditions for quasiconformal extensions
as in the Theorem \ref{T5} can be derived. Here we select a few
example out of a large variety of possibilities. The following is
based on the Theorem \ref{T11}.

\begin{theorem}
\label{T6}Let $\alpha >0$, and $f,g\in \mathcal{A}$. If%
\begin{equation*}
z^{1-\alpha }g(z)^{\alpha -1}f'(z)\in U(k)
\end{equation*}%
for all $z\in \mathcal{U},$ then the function $G_{\alpha
}(z)$ can be extended to a $\ k-$quasiconformal automorphism of $%
\mathbb{C}.$
\end{theorem}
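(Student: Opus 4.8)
The plan is to exhibit a Loewner chain $\mathcal{L}(z,t)$ with $\mathcal{L}(z,0)=G_{\alpha}(z)$ whose Loewner density equals the hypothesised function read off at a rescaled point, and then to invoke Becker's Theorem \ref{T11}. Put $\psi(w)=w^{1-\alpha}g(w)^{\alpha-1}f'(w)$, so the assumption is $\psi(w)\in U(k)$ for $w\in\mathcal{U}$. Two observations drive everything. First, $U(k)\subset\{\zeta:\Re\zeta>0\}$ and $0\notin U(k)$, so $\Re\psi>0$ and $\psi$ is zero-free on $\mathcal{U}$. Second, differentiating $G_{\alpha}^{\alpha}=\alpha\int_{0}^{z}g^{\alpha-1}f'$ yields $G_{\alpha}^{\alpha-1}G_{\alpha}'=g^{\alpha-1}f'$, hence $\psi(z)=\bigl(G_{\alpha}(z)/z\bigr)^{\alpha-1}G_{\alpha}'(z)$ and $\psi(0)=1$. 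I would aim for a chain with $p(z,t)=\psi(e^{-t}z)$, because then the requirement $p(z,t)\in U(k)$ of Theorem \ref{T11} is verbatim the hypothesis, and the extension constant remains $k$ rather than the weaker $K$ of Theorem \ref{T5}.

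Next I would construct $\mathcal{L}$ along the lines of the proof of Theorem \ref{T2}, as the solution of the generalized Loewner equation
\begin{equation*}
z\,\frac{\partial\mathcal{L}}{\partial z}=\psi(e^{-t}z)\,\frac{\partial\mathcal{L}}{\partial t},\qquad\mathcal{L}(z,0)=G_{\alpha}(z),
\end{equation*}
first on a subdisk $\mathcal{U}_{r}$, and verify the three hypotheses of Pommerenke's Theorem \ref{T1}. Analyticity on $\mathcal{U}_{r}$ and local absolute continuity in $t$, locally uniform in $z$, are checked exactly as for the chain \eqref{eq5}. Evaluating the equation as $z\to0$ gives $a_{1}'(t)/a_{1}(t)=1/\psi(0)=1$, so $a_{1}(t)=e^{t}$; in particular $|a_{1}(t)|\to\infty$, and $\{\mathcal{L}(z,t)/a_{1}(t)\}_{t\in I}$ is a normal family on a still smaller disk by the Montel argument used there.

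Because $\Re p=\Re\psi>0$ on $\mathcal{U}\times I$, Theorem \ref{T1} then promotes $\mathcal{L}$ to a Loewner chain, each $\mathcal{L}(\cdot,t)$ extending analytically and univalently to $\mathcal{U}$, with $\mathcal{L}(z,0)=G_{\alpha}(z)$. Since $p(z,t)=\psi(e^{-t}z)\in U(k)$ for all $z\in\mathcal{U}$ and $t\in I$, Theorem \ref{T11} now applies directly and produces the $k$-quasiconformal extension $F(z,\bar{z})=\mathcal{L}(z/|z|,\log|z|)$ of $\mathcal{L}(\cdot,0)=G_{\alpha}$ to $\mathbb{C}$.

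The hard part is the middle step: manufacturing a chain whose density is exactly $\psi(e^{-t}z)$ and controlling its regularity. The identity $p=\psi$ is the real content, and it is delicate: any additive correction of the type appearing inside \eqref{eq5} makes $(p-1)/(p+1)$ an affine, not a M\"obius, function of $\psi$ — the factor $\psi+1$ cancels from numerator and denominator — which would only secure the stronger centered condition $|\psi-1|\le k$ instead of the Cayley condition $\psi\in U(k)$. The chain must therefore be arranged so that $\psi+1$ survives in the denominator of $(p-1)/(p+1)$. Once $p=\psi$ holds, membership in $U(k)$ is immediate from the assumption, which is precisely why the extension constant is the clean value $k$.
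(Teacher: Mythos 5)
Your overall strategy (build a Loewner chain starting at $G_\alpha$ whose density lies in $U(k)$, then apply Theorem \ref{T11}) is the right one, but the step you yourself flag as ``the hard part'' is a genuine gap, and it is the whole proof. You propose to obtain $\mathcal{L}$ by solving the generalized Loewner equation with the \emph{prescribed} density $p(z,t)=\psi(e^{-t}z)$ \emph{and} the prescribed initial value $\mathcal{L}(\cdot,0)=G_\alpha$. These two prescriptions cannot in general be imposed simultaneously: given Herglotz data $p$ with $p(0,t)=1$, the standard Loewner chain solving \eqref{1} is essentially unique (any other Loewner-chain solution is an affine post-composition of it, since $\bigcup_{t}\mathcal{L}(\mathcal{U},t)=\mathbb{C}$ when $|a_1(t)|\to\infty$), so its initial element is determined by $p$ alone and there is no freedom left to make it equal $G_\alpha$. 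You would need to prove that the chain generated by the density $\psi(e^{-t}z)$ happens to start at $G_\alpha$, which is not argued and has no reason to hold. Since no explicit chain with this density is exhibited, the argument does not close.

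The paper's proof avoids this issue entirely by writing down the explicit chain $\mathcal{L}(z,t)=\bigl(\alpha\int_0^z g^{\alpha-1}(u)f'(u)\,du+(e^{\alpha t}-1)z^{\alpha}\bigr)^{1/\alpha}$, whose density computes to $p(z,t)=e^{-\alpha t}\psi(z)+\bigl(1-e^{-\alpha t}\bigr)$, a convex combination of $\psi(z)$ and $1$. Since $U(k)$ is a Euclidean disk that contains $1$, this combination lies in $U(k)$ whenever $\psi(z)$ does, and Theorem \ref{T11} then yields the clean constant $k$ with no degradation. This also shows that the concern motivating your insistence on $p(z,t)=\psi(e^{-t}z)$ is unfounded: an additive correction of the type appearing in \eqref{eq5} does not force you back to a centered condition such as $|\psi-1|\le k$; it is the convexity of the disk $U(k)$, not a M\"obius identity between $p$ and $\psi$, that preserves the Cayley condition. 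Your preliminary observations ($U(k)\subset\{\Re \zeta>0\}$, $\psi(0)=1$, and the identity $\psi(z)=(G_\alpha(z)/z)^{\alpha-1}G_\alpha'(z)$) are correct but do not repair the missing construction.
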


\begin{proof}
Set%
\begin{equation*}
\mathcal{L}(z,t)=\left({\alpha \int\limits_{0}^{z}{{g^{\alpha -1}(u)}f'(u)du}+({e^{\alpha t}-1)}}z{{{^{\alpha }}}}\right) ^{1/ {%
\alpha }}.
\end{equation*}%
An easy computation shows%
\begin{equation*}
p(z,t)=\frac{1}{{{e^{\alpha t}}}}\left( z^{1-\alpha }g(z)^{\alpha
-1}f'(z)\right) +\left( 1-\frac{1}{{{e^{\alpha t}}}}\right),
\end{equation*}%
and the assertion follows by the same methods as in Theorem
\ref{T5}, applying Theorems \ref{T1} and \ref{T11}.
\end{proof}

In the same manner, by definition of the suitable Loewner chain,
several univalence criterion may by found. For example,  the condition%
\begin{equation*}
\frac{zG_{\alpha }'(z)}{G_{\alpha }(z)}\in U(k)\quad (\alpha \in
\mathbb{C}),
\end{equation*}%
which is based on the integral operator $G_{\alpha }(z)$, is given
by the
Loewner chain%
\begin{equation*}
\mathcal{L}(z,t)={{e^{t}}}G_{\alpha }(z).
\end{equation*}

\bigskip
\noindent This work was partially supported by the Centre for
Innovation and Transfer of Natural Sciences and Engineering
Knowledge, Faculty of Mathematics and Natural Sciences, University
of Rzeszow.\\


\begin{thebibliography}{99}
\bibitem{Ah} L. V. Ahlfors, \textit{Sufficient conditions for quasiconformal
extension}, Ann. Math. Stud. 79 (1974), 23-29.

\bibitem{AnHi} J. M. Anderson and A. Hinkkanen, \textit{Univalence criteria and
quasiconformal extensions}, Trans. Amer. Math. Soc. 324 (1991),
823-842.

\bibitem{Be} J. Becker, \textit{Löwnersche differential gleichung und quasikonform
fortsetzbare schlichte functionen}, J. Reine Angew. Math. 255
(1972), 23-43. [In German].

\bibitem{Be2} J. Becker, \textit{Über die Lösungsstruktur einer
Differentialgleichung in der Konformen Abbildung}, J. Reine Angew.
Math. 285 (1976), 66-74.

\bibitem{Be3} J. Becker, \textit{Conformal mappings with quasiconformal extensions},
Aspects of Contemporary Complex Analysis, Ed. by D. A. Brannan and
J. G. Clunie, Acad. Press, (1980), 37-77.

\bibitem{Bet} Th. Betker, \textit{Löewner chains and quasiconformal extensions},
Complex Variables Theory Appl. 20(1-4) (1992), 107-111.

\bibitem{ca} M. Ca\u{g}lar and H. Orhan, \textit{Sufficient conditions for
univalence and quasiconformal extensions}, Indian J. Pure Appl.
Math. 46(1), (2015), 41-50.

\bibitem{DeOr} E. Deniz and H. Orhan, \textit{Some notes on extensions of basic
univalence criteria}, J. Korean Math. Soc. 48(1) (2011), 179-189.

\bibitem{DeOr1} E. Deniz and H. Orhan, \textit{Loewner chains and
univalence criteria related with Ruscheweyh and Sălăgean
derivatives}, J. Appl. Anal. Comput. 5(3) (2015), 465-478.

\bibitem{Ho} I. Hotta, \textit{Löwner chains with complex leading coefficient},
Monatsh. Math. 163(3) (2011), 315-325.

\bibitem{Ho1} I. Hotta, \textit{Explicit quasiconformal extensions and Loewner
chains}, Proc. Japan Acad. Ser. A. Math. Sci. 85 (2009), 108-111.

\bibitem{Ho2} I. Hotta, 'Löewner chains and quasiconformal extension of
univalent functions', Dissertation, Tohoku Univ., 2010. 50. S.

\bibitem{KL1}  S. Kanas and A. Lecko,  \textit{Univalence criteria connected with arithmetic
and geometric means, I}, Folia Sci. Univ.Tech. Resov. 20(1996), 49 -
59.

\bibitem{KL2} S. Kanas and A. Lecko,  \textit{Univalence criteria connected with
arithmetic and geometric means, II},  Proceedings of the Second Int.
Workshop of Transform Methods and Special Functions, Varna'96,
Bulgar. Acad. Sci. (Sofia)(1996), 201 - 209.

\bibitem{Kr} J. G. Krzy\.{z}, \textit{Convolution and quasiconformal extension}, Comment.
Math. Helv. 51 (1976), 99-104.

\bibitem{Le} Z. Lewandowski, \textit{On a univalence criterion}, Bull. Acad. Polon.
Sci. Ser. Sci. Math. 29~(1981), 123-126.

\bibitem{Mo} P. T. Mocanu, {Une propri\u{e}t\u{e} de convexit\u{e} generalis%
\u{e}es dans la th\u{e}orie de la r\u{e}presentation conforme},
Studia Universitatis Babeş-Bolyai Mathematica (Cluj) 11(34) (1969),
127--133. [In French]

\bibitem{MoPa1} S. Moldoveanu and N. N. Pascu, \textit{A new generalization of Becker's
univalence criterion (I)}, Studia Universitatis Babeş-Bolyai
Mathematica (Cluj) 31(54) 2 (1989), 153-157.

\bibitem{MoPa2} S. Moldoveanu and N.N. Pascu, \textit{Integral operators which preserve
the univalence}, Studia Universitatis Babeş-Bolyai Mathematica
(Cluj) 32 (55), no:2, (1990), 159-166.

\bibitem{Ov1} H. Ovesea, \textit{A generalization of Ruscheweyh's univalence
criterion}, J. Math. Anal. Appl.\ 258 (2001), 102-109.

\bibitem{Pa} N. N. Pascu, \textit{On a univalence criterion, II}, Itinerant seminar
on functional equations approximation and convexity, Cluj-Napoca,
Preprint nr. 6 (1985), 153-154.V.

\bibitem{Pe} V. Pescar, \textit{A new generalization of Ahlfor's and Becker's
criterion of univalence}, Bull. Malays. Math. Soc. 19 (1996),
53--54.

\bibitem{Pf} J. A. Pfaltzgraff, $K-$\textit{quasiconformal extension criteria in the
disk}, Complex Variables Theory Appl. 21 (1993), 293-301.

\bibitem{Po1} Ch. Pommerenke, \textit{Über die Subordination analytischer
Funktionen}, J. Reine Angew. Math. 218 (1965), 159-173.

\bibitem{Po} Ch. Pommerenke, 'Univalent Functions', Vandenhoeck Ruprecht in Gö%
ttingen, 1975.

\bibitem{RaOrDe} D. Raducanu, H. Orhan and E. Deniz, \textit{On some sufficient
conditions for univalence}, An. Stiint Univ. Ovidius Constanta Ser.
Mat. 18(2) (2010), 217-222.

\bibitem{Ru} S. Ruscheweyh, \textit{An extension of Becker's univalence condition},
Math. Ann. 220 (1976), 285-290.

\bibitem{SiCh} V. Singh and P. N. Chichra, \textit{An extension of Becker's
criterion for univalence}, J. Indian Math. Soc.\ 41 (1977), 353-361.

\bibitem{SrOw} H. M. Srivastava and S. Owa (Editors), 'Current Topics in
Analytic Function Theory', World Scientific Publishing Company,
Singapore, New Jersey, London and Hong Kong, 1992.
\end{thebibliography}
\end{document}